\theoremstyle{plain}
\newtheorem{theorem}{Theorem}
\newtheorem{Lemma}[theorem]{Lemma}
\newtheorem{Proposition}[theorem]{Proposition}
\theoremstyle{definition}
\newtheorem{Definition}[theorem]{Definition}
\newtheorem{Definition-Remark}[theorem]{Definition/Remark}
\theoremstyle{remark}
\newtheorem{Remark}[theorem]{Remark}
\numberwithin{theorem}{subsection}
\newcommand{\C}{\mathbb{C}}
\newcommand{\N}{\mathbb{N}}
\newcommand{\Z}{\mathbb{Z}}
\newcommand{\Q}{\mathbb{Q}}
\newcommand{\mc}[1]{\mathcal{#1}} 
\newcommand*{\rarr}{\ensuremath{\rightarrow}}
\newcommand*{\mbb}[1]{\ensuremath{\mathbb{#1}}}
\newcommand*{\mcal}[1]{\ensuremath{\mathcal{#1}}}
\newcommand*{\mfrk}[1]{\ensuremath{\mathfrak{#1}}}
\newcommand*{\mtxt}[1]{\ensuremath{\text{#1}}}
\newcommand*{\defeq}{\mathrel{\vcenter{\baselineskip0.5ex \lineskiplimit0pt
                     \hbox{\scriptsize.}\hbox{\scriptsize.}}}=}
\DeclareMathOperator{\Der}{\mathsf{D}}
\DeclareMathOperator{\Tr}{\mathsf{tr}}
\DeclareMathOperator{\Tor}{\mathsf{Tor}}
\DeclareMathOperator{\RR}{\mathsf{R}}
\begin{document}

\title{Ansatz for $(-1)^{n-1}\nabla p_n$}

\author[1]{Soumya D Banerjee}
\author[2]{Mahir Bilen Can}
\author[3]{Adriano Garsia}
\affil[1]{{\small Tulane University; sbanerjee@tulane.edu}}
\affil[2]{{\small Tulane University, mahirbilencan@gmail.com}}    
\affil[3]{{\small UCSD; garsiaadriano@gmail.com}}

\normalsize

\date{\today}
\maketitle

\begin{abstract}

We construct a special family of equivariant coherent sheaves on the Hilbert scheme on $n$-points in the affine plane. 
The equivariant Euler characteristic of these sheaves are closely related to the symmetic functions $(-1)^{n-1} \nabla p_n$. 
We prove a higher cohomology vanishing result of these sheaves. 
It follows from the Bridgeland-King-Reid correspondence that there is an effective $S_n$ module underlying the aforementioned family of symmetric functions. 

\vspace{.2cm}

\noindent
\textbf{Keywords:} Hilbert scheme of points, Atiyah-Bott-Lefschetz formula, nabla operator, symmetric functions,
Macdonald polynomials\\ 
\noindent 
\textbf{MSC:} {14L30, 14F99, 14C05, 05E05} 
\end{abstract}

\maketitle

\section{Introduction}

Let $S_n$ denote the symmetric group of permutations of the set $\{1,\dots, n\}$, 
and let $\mathbf{x} \cup \mathbf{y}$ denote the set of $2n$ algebraically 
independent variables defined by
$$
\mathbf{x}:=\{x_1,\dots, x_n\}\ \text{ and }\  \mathbf{y}:=\{y_1,\dots, y_n\}.
$$
We consider the diagonal action of $S_n$ 
on $\C[\mathbf{x},\mathbf{y}]:=\C[x_1,\dots, x_n,y_1,\dots, y_n]$ given by 
\[
\sigma\cdot x_i = x_{\sigma(i)} \text{ and } \sigma\cdot y_i = y_{\sigma(i)}\qquad 
\text{for $i=1,\dots, n$ and $\sigma \in S_n$.}
\]
Let $I_+$ denote the ideal generated by the $S_n$-invariant homogeneous polynomials of positive degree
in $\C[\mathbf{x},\mathbf{y}]$. The {\em ring of diagonal coinvariants} is the quotient ring  
$$
R_n :=\C[\mathbf{x},\mathbf{y}] / I_+.
$$
Clearly, $R_n$ has the structure of an infinite dimensional graded $S_n$-module,
whose grading is induced by the bi-degrees of the elements of $\C[\mathbf{x},\mathbf{y}]$.  
It is known for sometime that the corresponding bi-graded Frobenius 
character of $R_n$ is given by the Bergeron-Garsia's ``nabla'' operator, $\nabla$  
applied to the $n$-th elementary symmetric function, $e_n$. 
Roughly speaking, $\nabla$ is a $\mbb{Q}$-linear operator 
defined on the ring of symmetric functions, denoted by $\Lambda$, such that the 
`modified Macdonald symmetric functions' are eigenfunctions with prescribed eigenvalues
(see Section~\ref{S:Prelim} for details).
We say that 
a symmetric function $f \in \Lambda$ has a representation if 
$f$ is the Frobenius character of an $S_n$-module $F$. 
In particular, $\nabla e_n$ has the representation $F=R_n$.
It is a natural question to ask if $\nabla f$ has a representation
for any other well known symmetric function.
In general, for arbitrary $f\in \Lambda$, this question is difficult to answer. 
A prerequisite to have a positive answer is that $f$ is {\em Schur-positive},
that is to say, the expansion of $f$ in the Schur 
basis for $\Lambda$ has nonnegative integral coefficients.
We known from the work of Loehr and Warrington in~\cite{LoehrWarrington} that 
the symmetric functions $(-1)^{n-1}\nabla p_{n}$ ($n\in \N$) are Schur-positive. 
The purpose of our article is to construct, in an algebro-geometric way, 
a representation for each 
element of the set $\{(-1)^{n-1}\nabla p_{n}:\ n\in \N\}$.
We will use the techniques that are developed by Mark Haiman to prove 
the $n!$ conjecture (see~\cite{HaimanSurvey} for an overview).
\\

The fundamental geometric object in this story is the 
Hilbert scheme of $n$ points in the complex affine plane, 
which is denoted by $H_{n}$. 
It is a (fine) moduli space, parametrizing the ideals of colength $n$ in the coordinate ring of $\C^2$
The closed points of the scheme $H_{n}$ are given by
\begin{align}\label{A:variety}
H_n(\C) := \lbrace  I:\; I \subset \C[x,y] \text{ is an ideal such that } \dim_{\C} \C[x,y]/I = n  \rbrace.
\end{align}
Although it is not completely obvious from (\ref{A:variety}), 
$H_{n}$ turns out to be a smooth irreducible $2n$-dimensional algebraic variety over $\C$.
To set-up our notation and to further motivate our discussion, 
we will briefly review some important categories attached to $H_n$.

The Hilbert-Chow map,
\begin{align} \label{E:HilbertChow}
\sigma: H_n \rightarrow \C^{2n}/S_{n}
\end{align}
describes $H_{n}$ as a relative projective scheme over $\C^{2n}/S_{n}$,
and moreover, (\ref{E:HilbertChow}) turns out to be a crepant resolution of the isolated singularity. 
Consequently, the Bridgeland-King-Reid theorem provides a categorical equivalence, 
\begin{align}\label{E:catequivalence}
\Phi: \mathsf{D}^{b}(H_{n}) \longrightarrow \mathsf{D}^{b}_{S_{n}}(\C^{2n}),
\end{align}
between the bounded derived category of $S_{n}$-equivariant sheaves on $\C^{2n}$ 
and the bounded derived category of coherent sheaves on $H_{n}$.
Let $T$ denote the algebraic torus $\C^*\times \C^*$ with coordinates $(q,t)$.
The coordinatewise multiplication action of $T$ on $\C^2$ gives an action $T\times H_n\rightarrow H_n$.
Along with (\ref{E:catequivalence}), this torus action induces an isomorphism, also denoted by $\Phi$, 
between the equivariant Grothendieck groups,
\begin{align} \label{E:ktheoryisom}
\Phi: K^{0}_{T}(H_{n}) \cong K^{0}_{S_{n \times T}}(\C^{2n}).
\end{align}
By (\ref{E:ktheoryisom}), the questions about $S_{n}$-modules
translate into geometric questions about the equivariant sheaves on  $H_{n}$. 
More precisely, the $T$-equivariant Euler-characteristic of a sheaf
$\mcal{F}$, computed with the canonical polarization coming from (\ref{E:HilbertChow}), 
equals the Frobenius character of the $S_{n}$-representation $\Phi(\mcal{F})$. 
Indeed, $K^{0}_{S_{n \times T}}(\C^{2n})$ is freely generated, 
as a $\Z[q,t,q^{-1},t^{-1}]$-module, by the free 
$\C[\mathbf{x},\mathbf{y}]$-modules 
$V^\lambda \otimes \C[\mathbf{x},\mathbf{y}]$,
where $\lambda$ runs over all partitions of $n$, and
$V^\lambda$ is the irreducible $S_n$-module
corresponding to $\lambda$. 
Now, by applying the Euler-Frobenius characteristic  
map, it is not difficult to see that 
$K^{0}_{T}(H_{n})$ is isomorphic to the algebra 
of symmetric functions $f$ with the property that 
$f[(1-q)(1-t)Z]$ has coefficients in $\Z[q,t,q^{-1},t^{-1}]$.
Here, $(1-q)(1-t)Z$ is the symmetric function 
$(1-q)(1-t) (z_1+z_2+\cdots )$ and the brackets in
$f[(1-q)(1-t)Z]$ indicates that we are applying the 
``plethystic substitution.'' 

The  {\em isospectral Hilbert
scheme}, denoted by $X_n$, is the reduced fiber product 
$X_n := H_n \times_{\C^{2n}/S_n} \C^{2n}$, 
defined as in Figure~\ref{A:F1}.
\begin{figure}[h]
\begin{center}
\begin{tikzpicture}[scale=.75]
\node at (-2.25,1.5) (a) {$X_n$};
\node at (2.25,1.5) (b) {$\C^{2n}$};
\node at (-2.25,-1) (c) {$H_n$};
\node at (2.25,-1) (d) {$\C^{2n}/S_n$}; 
\node at (-2.7,0) {}; 
\node at (2.6,0) {}; 
\node at (2.75,.35) {$\pi$}; 
\node at (0,-.5) {$\sigma$}; 
\node at (-2.75,.35) {$\rho$}; 
\draw[->,thick] (a) to (b);
\draw[->,thick] (a) to (c);
\draw[->,thick] (b) to (d);
\draw[->,thick] (c) to (d);
\end{tikzpicture}
\end{center}
\label{A:F1}
\caption{The isospectral Hilbert scheme}
\end{figure}

In \cite{HaimanJams}, Haiman showed that the coherent sheaf $\mcal{P} :=
\rho_{\ast}(\mcal{O}_{X_{n}})$ is a locally free sheaf of rank $n!$ on $H_{n}$. 
The resulting vector bundle, called the {\em Procesi bundle}, turns out to be of fundamental importance. 
In particular, the class of $\mcal{P}|_{Z_{n}}$ in $K^{0}_{T}(H_{n})$, where $Z_{n}$ is the reduced
fiber $\sigma^{-1}( \pi(0))$ in $H_{n}$, corresponds to the representation underlying $\nabla e_{n}$
 under the isomorphism  $\Phi$. 
We are now ready explain the main result of our note.

We describe two $T$-equivariant coherent 
sheaves $\mcal{F}$ and $\mcal{F}^{\prime}$ on $H_n$. 
A priori, the first sheaf is an object of $\Der^{b}_{T}(H_{n}, \Z)$, 
and the second sheaf lives in $\Der^{b}_{T}(H_{n}, \Z[1/n])$. 
Roughly speaking, these sheaves are the subsheaves of some
standard vector bundles on $H_{n}$ twisted with suitable \'{e}tale local-systems coming from the torus $T$. 
By using a version of the Atiyah-Bott-Lefschetz theorem, 
we calculate the $T$-equivariant Euler-characteristic of these sheaves. 
We show for the first sheaf that 
the resulting expression is the Frobenius series of $(-1)^{n-1}n \nabla p_{n}$, and for the
second sheaf, it is $(-1)^{n-1}\nabla p_{n}$. 
We recorded these facts as Theorem~\ref{t:main} in Section~\ref{S:Prelim}.

The structure of our paper is as follows. In Section~\ref{S:Prelim}, 
we set up notation to explain the strategy of the proof of our Theorem~\ref{t:main}.
Also in the same section, we briefly review some relevant results of Haiman, Loehr and Warrington.
The final Section~\ref{S:Proof} is devoted to the proof of the theorem.

\subsubsection*{Acknowledgements}
The authors thank Eugene Gorsky for pointing out a crucial error in the first version of this paper. 
The authors thank Al Vitter for very helpful discussions.

\section{Preliminaries}\label{S:Prelim}

A partition $\mu$ of $n$ is a nonincreasing sequence 
of nonnegative integers $\mu=(\mu_1,\mu_2,\dots)$,
with $\sum_{i\geq 1} \mu_i = n$. 
We will use the French notation to represent the diagram of 
partitions as in Figure~\ref{F:F}. Therefore, there are 
$\mu_1$ squares in the bottom row, there are $\mu_2$ 
squares in the second row, and so on.  
The squares in the diagram of $\mu$ 
will be called the cells of the partition.
\begin{figure}[h]
\begin{center}
\begin{tikzpicture}[scale=.75]

\node at (1.25,1.75) {$x$}; 

\draw[-,thick] (0,0) -- (0,5) -- (1,5) -- (1,4) -- (3,4) -- (3,1) -- (5,1) -- (5,0) -- (0,0);
\draw[dotted] (0.5,0) -- (.5,5);
\draw[dotted] (1,0) rectangle (1,4);
\draw[dotted] (1.5,0) rectangle (1.5,4);
\draw[dotted] (2,0) rectangle (2,4);
\draw[dotted] (2.5,0) rectangle (2.5,4);
\draw[dotted] (3,0) rectangle (3,1);
\draw[dotted] (3.5,0) rectangle (3.5,1);
\draw[dotted] (4,0) rectangle (4,1);
\draw[dotted] (4.5,0) rectangle (4.5,1);
\draw[dotted] (5,0) rectangle (5,1);

\draw[dotted] (0,0.5) rectangle (5,.5);
\draw[dotted] (0,1) rectangle (4,1);
\draw[dotted] (0,1.5) rectangle (3,1.5);

\draw[dotted] (0,2) rectangle (3,2);
\draw[dotted] (0,2.5) rectangle (3,2.5);
\draw[dotted] (0,3) rectangle (3,3);
\draw[dotted] (0,3.5) rectangle (3,3.5);
\draw[dotted] (0,4) rectangle (1,4);
\draw[dotted] (0,4.5) rectangle (1,4.5);

\end{tikzpicture}
\end{center}
\caption{The diagram of a partition in French notation.}
\label{F:F}
\end{figure}

Let $\mu$ be a partition of $n$ with parts $\mu = (\mu_1,\mu_2,\dots )$.
In this case, we write $\mu \vdash n$. 
The partition that is conjugate to $\mu$ is denoted by
$\mu^\prime$.
We will use $n(\mu)$ to denote the following sum:
$$
n(\mu) = \sum_i (i-1) \mu_i.
$$
The dominance order on partitions, 
denoted by $\leq$, is defined by 
$$
(\mu_1,\mu_2,\dots) \leq (\lambda_1,\lambda_2,\dots) \iff 
\mu_1+ \dots +\mu_i \leq \lambda_1+ \dots +\lambda_i \ 
\text{ for all } i \geq 1.
$$

Let $x$ be a cell in $\mu$. 
The {\em arm, coarm, leg}, and the {\em coleg of $x$} are defined by  
\begin{enumerate}
          \item $a(x) = $ arm: the number of cells that are directly above $x$;  
          \item $a^{\prime}(x) = $ coarm: the number cells that are directly below $x$;
          \item $l(x) = $ leg: the number of cells to the right of $x$; 
          \item $l^{\prime}(x) = $ coleg:  the number of cells to the left of $x$.
\end{enumerate}          
For example, the arm, coarm, leg, and the coleg of $x$ in Figure~\ref{F:F} 
are 4,3,3, and 2, respectively.
In this notation, we identify the cells $x$ 
in the partition $\mu$ with their ``cartesian coordinates'' 
$(a^\prime (x) , l^\prime (x))$. 
We denote by $\square_{n}$ 
the partition with $n$ equal parts 
all of which equals $n$,
$$
\square_{n} := \{ (r,s) \in \Z \times \Z \; | 0 \leq r,s \leq n-1 \}.
$$
In addition, we define  
\begin{enumerate}
\item  $[t]_{n} := 1 + t + \ldots+ t^{n-1}$ and $[q]_{n} := 1 + q + \ldots+ q^{n-1}$;
\item $B_{\mu} := \sum_{x \in \mu} q^{a^{\prime}(x)}t^{l^{\prime}(x)}$;
\item $ T_{\mu} := q^{n(\mu^{\prime})}t^{n(\mu)}$;
\item $\Pi_{\mu} := \prod_{x \in \mu \setminus \{ 0,0 \}} (1 - q^{a^{\prime}(x)}t^{l^{\prime}(c)})$.
\end{enumerate}

Let $\Lambda$ denote the algebra of symmetric functions
in the variables $z=z_1,z_2,\dots $ with coefficients in $\Q(q,t)$. 
Then $\Lambda$ is a graded algebra, graded by the degree of the symmetric functions, 
$\Lambda = \bigoplus_{n\geq 0} \Lambda_n$. 
It is well known that the elementary symmetric functions, power-sum symmetric functions,
and the Schur functions, are $\Q$-bases for $\Lambda$ (see~\cite[Chapter I]{Macdonald}).

The {\em modified Macdonald symmetric functions}, which form a $\Q(q,t)$-basis for $\Lambda$, 
are uniquely determined by the following three properties:
\begin{enumerate}
\item $\widetilde{H}_\mu (z;q,t) \in \Q(q,t) 
\{ s_\lambda \lbrack Z/(1-q) \rbrack :\ \mu \leq \lambda \}$,
\item $\widetilde{H}_\mu (z;q,t) \in \Q(q,t) 
\{ s_\lambda \lbrack Z/(1-q) \rbrack :\ \mu' \leq \lambda \}$,
\item $\widetilde{H}_\mu \lbrack 1;q,t \rbrack =1$,
\end{enumerate}
where the brackets indicate the plethystic substitution,
$Z/(1-q)$ is the symmetric function 
$\frac{1}{1-q} (z_1+z_2+\cdots )$, and 
$s_\lambda(z)$ denotes a Schur function. 
The Bergeron-Garsia operator on $\Lambda$ is explicitly given by setting  
\[
\nabla ( \widetilde{H}_\mu(z;q,t) ) 
= t^{n(\mu)} q^{ n(\mu^\prime)} \widetilde{H}_\mu(z;q,t).
\]It turns out that $\nabla e_n$ is the Frobenius series 
of the ring of diagonal coinvariants $R_n$ and 
its expansion in the (modified) Macdonald basis is given by 
\begin{align}\label{E:en}
\nabla e_n =    \sum_{\mu \vdash n} \left( \frac{ (1-q)(1-t) \Pi_{\mu} B_{\mu}}
{ \prod_{x \in \mu} (1 - t^{1+l(x)}q^{-a(x)})(1 - t^{-l(x)}q^{1+ a(x)})} \right) \widetilde{H}_{\mu}(z;q,t),
\end{align}
see~\cite[Proposition 3.5.26]{HaimanSurvey}.

In \cite{LoehrWarrington}, Loehr and Warrington obtained an expansion of the symmetric functions $(1)^{n-1} \nabla
p_{n}$ in the (modified) Macdonald basis,
\begin{align}\label{E:pn}
(-1)^{n-1}\nabla p_{n}= \sum_{\mu \vdash n} \left (\frac{(1-t^{n})(1- q^{n}) \Pi_{\mu} T_{\mu}}
{\prod_{x \in \mu}(q^{a(x)} - t^{1+ l(x)})(t^{l(x)} - q^{1+ a(x)}) } \right) \widetilde{H}_{\mu}(z;q,t).
\end{align}
By using the identities
\begin{align*}
  n(\mu) = \sum_{x \in \mu} l(x) \ \text{ and }\ n(\mu^{\prime}) = \sum_{x \in \mu^{\prime}} l(x) = \sum_{x \in \mu} a(x),
\end{align*}
let us rewrite \eqref{E:pn} in a form that is similar to \eqref{E:en};
\begin{align}\label{E:refinedpn}
(-1)^{n-1}\nabla p_{n}= \sum_{\mu \vdash n} \left (\frac{(1-t)(1- q) \Pi_{\mu} [t]_{n}[q]_{n}}
{\prod_{x \in \mu}(1 - t^{1+l(x)}q^{-a(x)})(1 - t^{-l(x)} q^{1+a(x)}) } \right) \widetilde{H}_{\mu}(z;q,t).
\end{align}

The purpose of this note is to prove the following theorem.

\begin{theorem}\label{t:main}
  There exists a coherent sheaf $\mathcal{F}$ (resp. $\mcal{F}^{\prime}$) on the Hilbert scheme $H_n$ 
  such that the Frobenius series of the $S_n$-module under the derived equivalence  $\Phi(\mcal{F} \otimes \mcal{P})$ (resp. $\Phi(\mcal{F}^\prime \otimes \mcal{P})$) is given by $(-1)^{n-1}n \nabla p_{n}$ (resp. $(-1)^{n-1}\nabla p_{n}$).
\end{theorem}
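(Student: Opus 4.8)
The plan is to reduce the construction of $\mcal{F}$ and $\mcal{F}^{\prime}$ to a matching of equivariant fixed‑point characters, and then to realize the required characters by honest (complexes of) coherent sheaves. The starting point is the Atiyah--Bott--Lefschetz localization formula on $H_n$. Recall that the $T$‑fixed points of $H_n$ are exactly the monomial ideals $I_\mu$ with $\mu\vdash n$, that the tangent space at $I_\mu$ carries the $T$‑weights $\{t^{-l(x)}q^{1+a(x)},\,t^{1+l(x)}q^{-a(x)}:x\in\mu\}$, and that Haiman's description of the Procesi bundle gives $\mcal{P}|_{I_\mu}\cong\widetilde H_\mu(z;q,t)$ as a bigraded $S_n$‑module. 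Combining these with the isomorphism $\Phi$ of \eqref{E:ktheoryisom}, for any $T$‑equivariant coherent sheaf (or bounded complex) $\mcal{E}$ on $H_n$ one obtains the master formula
\[
\mathrm{Frob}\bigl(\Phi(\mcal{E}\otimes\mcal{P})\bigr)=\sum_{\mu\vdash n}\frac{\mathrm{ch}_T(\mcal{E}|_{I_\mu})\,\widetilde H_\mu(z;q,t)}{\prod_{x\in\mu}(1-t^{1+l(x)}q^{-a(x)})(1-t^{-l(x)}q^{1+a(x)})},
\]
where $\mathrm{ch}_T(\mcal{E}|_{I_\mu})$ denotes the class of the (derived) restriction of $\mcal{E}$ to $I_\mu$ in the localized equivariant $K$‑theory.

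Comparing this with the Loehr--Warrington expansion \eqref{E:refinedpn}, the theorem reduces to exhibiting $T$‑equivariant objects $\mcal{F},\mcal{F}^{\prime}$ with fixed‑point characters $\mathrm{ch}_T(\mcal{F}|_{I_\mu})=n(1-q)(1-t)[q]_n[t]_n\,\Pi_\mu$ and $\mathrm{ch}_T(\mcal{F}^{\prime}|_{I_\mu})=(1-q)(1-t)[q]_n[t]_n\,\Pi_\mu$. It is instructive to contrast this with the realization of $\nabla e_n$: via the master formula and \eqref{E:en}, the sheaf $\mcal{P}|_{Z_n}$ corresponds to the fixed‑point datum $\mathrm{ch}_T(\mcal{O}_{Z_n}|_{I_\mu})=(1-q)(1-t)B_\mu\,\Pi_\mu$, so that \emph{the only change needed is to replace the tautological character $B_\mu$ by the constant character $[q]_n[t]_n$} (and to insert the factor $n$). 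This dictates the geometry. The factor $\Pi_\mu=\prod_{x\in\mu\setminus\{(0,0)\}}(1-q^{a^{\prime}(x)}t^{l^{\prime}(x)})$ is the $T$‑equivariant $K$‑class of the Koszul complex $\wedge^{\bullet}$ of the tautological rank‑$n$ bundle $B$ (whose fiber at $I$ is $\C[x,y]/I$, so $\mathrm{ch}_T(B|_{I_\mu})=B_\mu$) modulo its canonical nowhere‑vanishing section $1\in\C[x,y]/I$; the constant factor $[q]_n[t]_n$ is the character of the fixed trivial bundle $W:=\C[x,y]/(x^n,y^n)$, which makes sense precisely because $(x^n,y^n)\subseteq I_\mu$ for every $\mu\vdash n$; and $(1-q)(1-t)$ together with $[q]_n[t]_n$ assemble into $(1-q^n)(1-t^n)$, the $n$‑th Adams/Frobenius twist of $(1-q)(1-t)$.

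First I would construct $\mcal{F}$ as the tensor product of the Koszul complex realizing $\Pi_\mu$ with the trivial bundle $W$ twisted by the \'{e}tale $T$‑local system attached to the $n$‑th power map $[n]\colon T\to T$ (equivalently, to the $n$‑torsion subgroup $T[n]\subset T$); the factor $n$ arises as the degree of this torsion twist. This is the promised description of $\mcal{F}$ as a subsheaf of a standard bundle twisted by an \'{e}tale local system, and it lives integrally, i.e.\ in $\Der^{b}_T(H_n,\Z)$. I would then set $\mcal{F}^{\prime}:=\tfrac1n\mcal{F}$; since one cannot divide a sheaf by $n$ on the nose, this object only makes sense after inverting $n$, which is exactly why $\mcal{F}^{\prime}\in\Der^{b}_T(H_n,\Z[1/n])$. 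Feeding the two fixed‑point characters into the master formula and simplifying by $(1-q)(1-t)[q]_n[t]_n=(1-q^n)(1-t^n)$ (and cancelling $T_\mu$ exactly as in the passage from \eqref{E:pn} to \eqref{E:refinedpn}) yields $(-1)^{n-1}n\nabla p_n$ and $(-1)^{n-1}\nabla p_n$ respectively.

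The main obstacle is twofold. First, one must promote the virtual fixed‑point characters to \emph{honest} coherent objects with the correct global cohomology, rather than mere classes in localized $K$‑theory: the factor $\Pi_\mu$ is visibly an alternating (Koszul) quantity and is opaque in the abelian category, so the construction must be carried out in $\Der^{b}_T(H_n)$ and one must verify that the resulting complex is cohomologically concentrated enough to represent an effective, not merely virtual, $S_n$‑module. Second, and most technically, one must prove the advertised higher cohomology vanishing $R^{i}\Phi(\mcal{F}\otimes\mcal{P})=0$ for $i>0$, for which I would invoke Haiman's vanishing theorems for tensor powers of the tautological and Procesi bundles on $H_n$ together with the Bridgeland--King--Reid equivalence; this is what upgrades the Euler‑characteristic computation to an actual bigraded $S_n$‑module underlying $(-1)^{n-1}n\nabla p_n$ (and, over $\Z[1/n]$, underlying $(-1)^{n-1}\nabla p_n$).
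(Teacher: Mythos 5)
Your overall strategy coincides with the paper's: both arguments run the Atiyah--Bott--Lefschetz localization over the monomial ideals $I_\mu$, match the fixed-point data against the Loehr--Warrington expansion \eqref{E:refinedpn}, realize the factor $(1-q)(1-t)\Pi_\mu$ by the Koszul-type resolution \eqref{E:KoszulResolution} of $\mcal{O}_{Z_n}$, realize $[q]_n[t]_n$ by a trivial rank-$n^2$ equivariant bundle with that character (your $W=\C[x,y]/(x^n,y^n)$ has the same equivariant class as the paper's $\mcal{V}=\bigoplus_{(r,s)\in\square_n}L_{\chi_{r,s}}$), and deduce effectivity from Haiman's vanishing theorem for $\mcal{P}\otimes\mcal{B}^{\otimes l}$. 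However, there is a genuine gap exactly where you flag ``the main obstacle'': the theorem asserts the existence of a \emph{coherent sheaf}, and your construction only produces a class in localized equivariant $K$-theory (a Koszul complex tensored with a trivial bundle), with no argument that it is represented by an honest sheaf. The paper fills this by forming $\mcal{S}^\bullet=\mcal{V}\otimes\mcal{O}_{H_n}\otimes\wedge^{\bullet}(\mcal{B}^\prime\oplus\mcal{O}_t\oplus\mcal{O}_q)$, using the splitting $\mcal{B}=\mcal{B}^\prime\oplus\mcal{O}_{H_n}$ and the chain maps $i$, $\Tr$ to exhibit $\mcal{S}^\bullet$ as a direct summand, in the derived category, of $\mcal{V}$ tensored with the resolution of $\mcal{O}_{Z_n}$; this forces $\mcal{S}^\bullet$ to be exact outside degree zero, and $\mcal{F}$ is then \emph{defined} as its degree-zero image inside $\mcal{O}_{Z_n}\otimes\mcal{V}$. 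Without this (or an equivalent) concentration argument, the existence claim is unproved.

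A second discrepancy concerns the factor $n$. You place the $n$-torsion twist on $\mcal{F}$ itself, assert that ``the degree of the torsion twist'' produces the factor $n$, and then set $\mcal{F}^\prime=\tfrac1n\mcal{F}$ over $\Z[1/n]$. In the paper the roles are arranged differently: $\mcal{F}$ is built from the untwisted $\mcal{V}$ and the extra $n$ is attributed to the trace term $\Tr_{k(I_\mu)}(q,t)$ in the localization formula, while it is $\mcal{F}^\prime$ that uses the isogeny $\iota(q,t)=(q^n,t^n)$, which is what forces coefficients in $\Z[1/n]$. As written, your mechanism does not supply an equivariant object whose fixed-point character is $n\,[q]_n[t]_n\,(1-q)(1-t)\Pi_\mu$ --- a degree count of a covering is not a fiber character --- so the $n$ in $(-1)^{n-1}n\nabla p_n$ is not accounted for by the geometry you describe. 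The remaining ingredients (the weights of the tangent space at $I_\mu$, the identification $\mcal{P}|_{I_\mu}\cong\widetilde H_\mu$, and the vanishing step) agree with the paper.
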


Let us summarize the major steps of the proof.
\begin{enumerate}

\item We present a construction of $\mcal{F}$ (resp. $\mcal{F}^\prime$).

\item By using the Atiyah-Bott-Lefschetz formula, we calculate the 
equivariant Euler characteristics of the sheaf 
$\mcal{F} \otimes \mcal{P}$ (and of $\mcal{F}^\prime \otimes \mcal{P}$), and 
we show that it has the expected form as in (\ref{E:refinedpn}).

\item We show that the higher cohomology of $\mcal{F}$ (resp. of $\mcal{F}^\prime$) vanishes. 
This proves that under the Bridgeland-King-Reid correspondence the resulting
  $S_n$-module, which is a priori a virtual representation, 
is an honest $S_n$-representation.
\end{enumerate}

\subsection{Some useful facts about Hilbert scheme of points on the plane}

In this subsection, we will recall some basic facts about $H_{n}$ that we will use in the sequel. 
For the detailed proofs of these facts, we refer to~\cite[Section 2]{HaimanDiscrete}.

The two-dimensional torus $T$ with coordinates $(q,t)$ acts on $\C^{2}$.
On the coordinate ring $\C[x,y]$ of $\C^2$, this action is given by $(q,t)\cdot x = qx$ and $(q,t)\cdot y = ty$. 
The induced action on closed points of $H_{n}$ is given by the pull-back of ideals.
Explicitly, if $I = \langle p(x,y) \rangle$, then 
$$
(q,t)\cdot I = \langle p(q^{-1}x, t^{-1}y) \rangle.
$$

We know from the Gotzmann-Grothendieck construction that $H_n$ has an imbedding into a 
high dimensional Grassmann variety. 
By pulling back the standard open covering of the Grassmann variety, 
we get an open cover $\{U_{\mu}\}$ of $H_{n}$ indexed by the partitions of $n$. 
Explicitly, the closed points of $U_{\mu}$ are given by 
\begin{align}\label{E:standardopen}
U_\mu := \{ I \in H_n:\ \C[x,y]/I \ \text{admits a vector-space basis of monomials } x^{h}y^{k} \mtxt{ for } (h,k) \in
  \mu \}.
\end{align}
The coordinate ring $\mcal{O}_{U_{\mu}}$  is generated by functions  $c^{rs}_{hk}$. 
On an ideal $I \in U_{\mu}$, these functions take the values
\[ 
x^{r}y^{s} = \sum_{(h,k) \in \mu} c^{rs}_{hk}(I) x^{h}y^{k} \mod (I)
\] 
for all $r,s \geq 0$. 
As a result, the $T$-action on $c^{rs}_{hk}$ is given by $(q,t) c^{rs}_{hk} = q^{r-h}t^{s-k}c^{rs}_{hk}$. 
It follows from the definitions that the above open covering is stable under the $T$-action. 
Furthermore, the $T$-fixed points of $H_{n}$ are precisely the monomial ideals 
\[
I_\mu := \langle x^h y^k :\ (h,k) \notin \mu \rangle.
\] 
The maximal ideal $\mfrk{m}_{\mu}$ at $I_{\mu}$ is described by
\[ 
\mfrk{m}_{\mu} = \{ c^{rs}_{hk} | \; (r,s) \notin \mu \}.
\]

Let $\pi: F_{n}\rightarrow H_n$ denote the universal family over $H_{n}$. 
We set 
$ \mcal{B}:= \pi_{\ast} \mcal{O}_{F_{n}}.$ 
The sheaf $\mcal{B}$ is locally free, and it has rank $n$. 
The trace map
\[ 
\Tr: \mcal{B} \rarr \mcal{O}_{H_{n}}
\] 
splits the canonical inclusion $ \mcal{O}_{H_{n}} \rightarrow  \mcal{B}$, and hence, 
we obtain an equivariant splitting of vector bundles,
\[ 
\mcal{B} = \mcal{B}^{\prime} \oplus \mcal{O}_{H_{n}}.
\]


\begin{Remark}
Following Haiman, we will call $\mcal{B}$ the {\em tautological bundle} on $H_{n}$. 
Note that the Hilbert-Chow map $\sigma: H_{n} \rarr \C^{2n}/S_{n}$ is a blow-up of $\C^{2n}/S_{n}$. 
This provides $H_{n}$ with a canonical ample line bundle $\mcal{O}(1)$. 
It turns out that $\wedge^{n} \mcal{B} = \mcal{O}(1)$ (see~\cite{HaimanDiscrete}).
\end{Remark}

Let $[0]$ denote the image of origin $0\in \C^{2n}$ under the canonical projection 
$\C^{2n} \rarr \C^{2n}/S_{n}$. 
The \emph{zero fiber}, denoted by $Z_{n}$ is the closed subscheme $ \sigma^{-1}([0])$. 
By Brian\c{c}on's work~\cite{Briancon}, we know that $Z_{n}$ is an irreducible scheme of dimension
$n-1$. In~\cite{HaimanDiscrete}, Haiman showed that the structure sheaf $\mcal{O}_{Z_{n}}$ is $T$-equivariantly
isomorphic to a perfect complex in derived category of coherent sheaves in $H_{n}$. More precisely, he proved 
the following result. 
\begin{theorem}
    In the derived category of $T$-equivariant coherent sheaves on $H_{n}$, the sheaf $\mcal{O}_{Z_{n}}$ is isomorphic to
 \begin{align}\label{E:KoszulResolution}
   0 \rightarrow \mc{B} \otimes \wedge^{n+1} (\mc{B}^\prime \oplus \mc{O}_t \oplus \mc{O}_q)\rightarrow \dots \rightarrow 
\mc{B} \otimes (\mc{B}^\prime \oplus \mc{O}_t \oplus \mcal{O}_q)
\rightarrow \mc{B} \rarr 0,
 \end{align}
where all tensor products are over $\mcal{O}_{H_{n}}$ and $\mcal{O}_{t}$ (resp. $\mcal{O}_{q}$) denote trivial  line bundles
 on $H_{n}$ with $T$-characters $t$ (resp. $q$).
\end{theorem}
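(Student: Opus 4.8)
The plan is to exhibit the complex (\ref{E:KoszulResolution}) as a $T$-equivariant \emph{Koszul complex of commuting operators} acting on the tautological bundle $\mcal{B}$, and then to prove that this complex is a locally free resolution of $\mcal{O}_{Z_n}$. The starting observation is that $\mcal{B} = \pi_\ast\mcal{O}_{F_n}$ is not merely a vector bundle but a sheaf of commutative $\mcal{O}_{H_n}$-algebras whose fibre over an ideal $I$ is the Artinian ring $\C[x,y]/I$. Multiplication therefore gives three families of commuting $\mcal{O}_{H_n}$-linear endomorphisms of $\mcal{B}$: multiplication by the coordinate $x$ (call it $X$), multiplication by $y$ (call it $Y$), and multiplication by the sections of the trace-free summand $\mcal{B}'$. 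Since $x$ and $y$ carry $T$-weights $q$ and $t$, these become $T$-equivariant maps $\mcal{B}\otimes\mcal{O}_q\to\mcal{B}$, $\mcal{B}\otimes\mcal{O}_t\to\mcal{B}$, and $\mcal{B}\otimes\mcal{B}'\to\mcal{B}$, which assemble into an equivariant map $\theta\colon \mcal{E}\to \operatorname{End}_{\mcal{O}_{H_n}}(\mcal{B})$ out of $\mcal{E}:=\mcal{B}'\oplus\mcal{O}_t\oplus\mcal{O}_q$ whose image consists of pairwise commuting operators. The associated Koszul differential on $\mcal{B}\otimes\wedge^\bullet\mcal{E}$ is precisely the complex (\ref{E:KoszulResolution}), and $d^2=0$ is immediate from the commutativity of multiplication in $\mcal{B}$.

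Next I would identify the augmentation and the degree-zero homology. The last map $d_1\colon \mcal{B}\otimes\mcal{E}\to\mcal{B}$ has image the $\mcal{O}_{H_n}$-submodule $\mcal{B}\cdot\mcal{B}' + X\mcal{B} + Y\mcal{B}$, so $H_0$ of the complex is the quotient $\mcal{B}/(\mcal{B}\cdot\mcal{B}'+X\mcal{B}+Y\mcal{B})$; I claim this is canonically $\mcal{O}_{Z_n}$, with the augmentation induced by the trace splitting $\mcal{B}=\mcal{O}_{H_n}\oplus\mcal{B}'$. Indeed, since $1\in\mcal{B}$ the image already contains $\mcal{B}'$, so the cokernel is a cyclic quotient of $\mcal{O}_{H_n}$, and a fibrewise computation pins it down. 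Over a point of $Z_n$ the fibre $\C[x,y]/I$ is local Artinian, multiplication by any element of its maximal ideal is nilpotent and hence trace-free, so $X\mcal{B}+Y\mcal{B}$ lands in $\mcal{B}'$ and the cokernel fibre is one-dimensional; away from $Z_n$ some point lies off the origin, $X$ or $Y$ acts invertibly, and the cokernel fibre vanishes. Matching this against the ideal $\mcal{I}_{Z_n}=\sigma^\ast\mfrk{m}_0\cdot\mcal{O}_{H_n}=(\Tr(X^rY^s):(r,s)\neq(0,0))$ identifies $H_0\cong\mcal{O}_{Z_n}$.

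The heart of the argument is the vanishing of the higher homology of (\ref{E:KoszulResolution}), and this is the step I expect to be the main obstacle. The difficulty is that $\theta$ is a family of commuting \emph{endomorphisms} rather than of functions, and it is visibly redundant ($X$ and $Y$ already generate $\mcal{B}$ as an algebra), so one cannot simply invoke that $n+1$ elements form a regular sequence. The right framework is to regard $\mcal{B}$ as a module over the polynomial algebra $\mcal{O}_{H_n}[T_0,\dots,T_n]$ with $T_i$ acting by $\theta(e_i)$; then (\ref{E:KoszulResolution}) is the Koszul complex of $(T_0,\dots,T_n)$ on $\mcal{B}$, so its homology computes $\Tor_\bullet^{\mcal{O}_{H_n}[\underline{T}]}(\mcal{B},\mcal{O}_{H_n})$. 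Higher vanishing is thus equivalent to $(T_0,\dots,T_n)$ being a regular sequence on $\mcal{B}$, i.e.\ to $\mcal{B}$ being Cohen--Macaulay of the expected dimension along the zero section. To establish this I would combine two facts: $H_n$ is smooth, hence Cohen--Macaulay, of dimension $2n$; and, by Brian\c{c}on's theorem~\cite{Briancon}, $\dim Z_n=n-1$, so the support of the cokernel has codimension exactly $n+1=\operatorname{rank}\mcal{E}$, the length of the complex. The acyclicity lemma of Peskine--Szpiro (equivalently the Buchsbaum--Eisenbud exactness criterion) then forces the positive-degree homology, which is supported on $Z_n$ and would otherwise have too small a codimension, to vanish.

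Finally, all the maps above are $T$-equivariant by construction, the twists $\mcal{O}_q$ and $\mcal{O}_t$ having been inserted precisely to absorb the weights of $X$ and $Y$; hence the resulting locally free resolution of $\mcal{O}_{Z_n}$ is a genuine isomorphism in $\Der^b_T(H_n)$, which is the assertion of the theorem. The only point requiring care beyond formal bookkeeping is the regularity input of the previous paragraph, for which Brian\c{c}on's dimension count, together with smoothness of $H_n$, is the essential geometric ingredient.
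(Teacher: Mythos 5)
This theorem is not proved in the paper at all: it is quoted verbatim from Haiman~\cite{HaimanDiscrete}, so there is no in-paper argument to compare against. Your reconstruction is, in substance, Haiman's original proof: build the complex as the Koszul complex of the commuting multiplication operators $X$, $Y$, and $\mcal{B}'$ acting on the algebra bundle $\mcal{B}$, identify the degree-zero homology fibrewise, and obtain acyclicity from the fact that $\mcal{B}$ is a Cohen--Macaulay (indeed locally free) module on the smooth $2n$-fold $H_n$ together with Brian\c{c}on's count $\dim Z_n=n-1$, which makes the codimension of the support of the cokernel equal to the length $n+1$ of the complex, so that the $n+1$ operators form a regular sequence on $\mcal{B}$. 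Two small points deserve more care than you give them. First, Peskine--Szpiro in its usual form (requiring the positive-degree homology to have depth zero) is not quite the right lever; the clean statement is the one you also mention, namely that elements cutting the dimension of a Cohen--Macaulay module by exactly their number form a regular sequence, and since $\mcal{E}=\mcal{B}'\oplus\mcal{O}_t\oplus\mcal{O}_q$ is not globally trivial the $\mcal{O}_{H_n}[T_0,\dots,T_n]$ picture should be set up locally (exactness being a local question). Second, you should not \emph{assume} the identification of the scheme-theoretic cokernel with the reduced fiber via the trace ideal $(\Tr(X^rY^s))$; the fibrewise computation only pins down the support and the fibre ranks. The correct order of argument is: once acyclicity is known, Auslander--Buchsbaum shows the cokernel is Cohen--Macaulay of dimension $n-1$, hence has no embedded components, and since it is irreducible (Brian\c{c}on) and generically reduced it is reduced, which is what finally identifies it with $\mcal{O}_{Z_n}$. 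With those adjustments the argument is correct and matches the source Haiman proof that the paper is citing.
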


\section{Proof of the Theorem} \label{S:Proof}
\subsection{Step 1.}

Let $p_1$ and $p_2$ denote the canonical projections as in Figure~\ref{D:label}. 
\begin{figure}[h]
\begin{center}
\begin{tikzpicture}[scale=.75]
\node at (-2.25,1.5) (a) {$H_n \times T$};
\node at (1.75,1.5) (b) {$T$};
\node at (-2,-1.65) (c) {$H_n$};

\node at (0,2) {$p_2$}; 
\node at (-2.85,0) {$p_1$}; 
\draw[->,thick] (a) to (b);
\draw[->,thick] (-2.25,1.1) to (-2.25,-1.1);
\end{tikzpicture}
\end{center}
\label{D:label}
\caption{}
\end{figure}

\begin{Lemma} 
If $\mcal{F}$ is a quasi-coherent sheaf on $H_{n} \times T$, 
then we have $R^{i}p_{\ast}(\mcal{F}) = 0 $ for all $i > 0$.
\end{Lemma}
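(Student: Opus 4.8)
The plan is to recognize this vanishing as a special case of the standard fact that higher direct images of quasi-coherent sheaves vanish along an \emph{affine} morphism. Here $p\colon H_n \times T \rarr H_n$ denotes the projection onto the first factor, and the entire content of the lemma is the observation that $p$ is an affine morphism.

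First I would record that $T = \C^*\times \C^*$ is an affine scheme, namely $\Spec \C[q,t,q^{-1},t^{-1}]$. Since affineness of a morphism is stable under base change, and $p$ is obtained by base changing the (affine) structure morphism $T \rarr \Spec \C$ along $H_n \rarr \Spec \C$, it follows that $p$ is affine; equivalently, the preimage of any affine open of $H_n$ is affine.

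Next, since the formation of $R^{i}p_{\ast}(\mcal{F})$ is local on the base, it suffices to check the vanishing over the affine opens of the cover $\{U_\mu\}$ introduced in Section~\ref{S:Prelim}. Recall that $R^{i}p_{\ast}(\mcal{F})$ is the sheafification of the presheaf $U \mapsto H^{i}(p^{-1}(U), \mcal{F})$. For $U = U_\mu$ the preimage $p^{-1}(U_\mu) = U_\mu \times T$ is a product of two affine $\C$-schemes, hence affine, and $\mcal{F}$ restricts to a quasi-coherent sheaf on it. The classical vanishing of higher cohomology for quasi-coherent sheaves on an affine scheme (Serre's criterion) then gives $H^{i}(U_\mu \times T, \mcal{F}) = 0$ for all $i>0$. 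As the $U_\mu$ cover $H_n$, the presheaf defining $R^{i}p_{\ast}(\mcal{F})$ vanishes on a basis of opens, whence $R^{i}p_{\ast}(\mcal{F}) = 0$ for $i>0$.

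There is no serious obstacle here: the whole argument reduces to the affineness of $p$, which in turn rests only on $T$ being an affine variety. The two points requiring a moment's care are the stability of affineness under base change (so that $p$ is affine) and the fact that a product of affine $\C$-schemes is again affine (so that $p^{-1}(U_\mu) = U_\mu \times T$ is affine); both are entirely routine. I would not expect to use the $T$- or $S_n$-equivariant structure at all, since the statement concerns ordinary quasi-coherent sheaves.
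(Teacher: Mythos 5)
Your argument is correct and is essentially the paper's own proof spelled out: the paper simply cites the EGA result that higher direct images of quasi-coherent sheaves vanish along an affine morphism, and your reduction to the affineness of $p$ (via $T$ being affine and affineness being stable under base change) is exactly the content of that citation. Nothing further is needed.
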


\begin{proof}
    This follows directly from Corollary I.3.2 Chapter III of  \cite{EGA}.
\end{proof}

We will interchangeably use the same letter to denote a vector bundle and the associated sheaf of sections. 
Let $\chi$ be a character of the torus $T$, and let $\mcal{O}_{\chi}$ denote the associated line bundle on $T$. 
Then we define $L_\chi$ as follows;
\[ 
L_{\chi} := p_{1,\ast}\circ p_{2}^{\ast}(\mcal{O}_{\chi}). 
\]

\begin{Proposition}
The quasi-coherent sheaf $L_{\chi}$ is a $T$-equivariant line bundle on $H_{n}$.
\end{Proposition}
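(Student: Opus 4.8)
The plan is to produce $L_\chi$ as the descent of the character line bundle along the torsor $p_1$, and to read off its $T$-equivariant structure from the $T$-actions on the two factors. First I would fix the actions: let $T$ act on $H_n\times T$ through the given action on the first factor and through left translation on the second, so that both $p_1$ and $p_2$ are $T$-equivariant. By definition $\mcal{O}_\chi$ is the structure sheaf $\mcal{O}_T$ equipped with the $T$-linearization in which $T$ acts on global sections by translation twisted by the character $\chi$; in particular it is $T$-equivariantly invertible. Since $p_2$ is equivariant, $p_2^*\mcal{O}_\chi$ is then a $T$-equivariant invertible sheaf on $H_n\times T$, and because the pushforward of a $T$-equivariant sheaf along the equivariant map $p_1$ carries a canonical $T$-linearization, $L_\chi=p_{1,*}p_2^*\mcal{O}_\chi$ is automatically $T$-equivariant. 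The morphism $p_1$ is affine, being the base change of the affine morphism $T\to\Spec\C$, so the preceding Lemma (applied with $p=p_1$) gives $R^ip_{1,*}(p_2^*\mcal{O}_\chi)=0$ for $i>0$; hence $L_\chi$ is computed by the underived pushforward and no higher-cohomology corrections intervene.

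It remains to prove that $L_\chi$ is locally free of rank one, and this is the heart of the matter. The key point is that each fibre $\{I\}\times T$ of $p_1$ is a torsor for the translation action of $T$ on the second factor, and that $p_2^*\mcal{O}_\chi$ restricts on this fibre to the character line bundle $\mcal{O}_\chi$. I would therefore argue by fibrewise equivariant descent: the equivariant pushforward along the trivial torsor $p_1$ isolates, from the $T$-linearized module, the rank-one piece on which $T$ acts through the prescribed weight, exactly as descent along $\{I\}\times T\to\{I\}$ sends the character bundle to a one-dimensional space. Concretely, I would verify this on the standard affine charts $U_\mu$ recalled in Section~\ref{S:Prelim}: there $p_1^{-1}(U_\mu)=U_\mu\times T$ and the ordinary pushforward is $\mcal{O}_{U_\mu}\otimes_\C\C[q^{\pm1},t^{\pm1}]$, on which $T$ acts diagonally; the $\mcal{O}_{U_\mu}$-submodule cut out by the weight condition coming from $\chi$ is generated by a single Laurent monomial, hence free of rank one, giving a trivialization $L_\chi|_{U_\mu}\cong\mcal{O}_{U_\mu}$. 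These local trivializations are compatible on overlaps because the monomial generators differ by units, so they patch to exhibit $L_\chi$ as an invertible sheaf.

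The step I expect to be the genuine obstacle is precisely this rank-one claim, since the naive non-equivariant pushforward $p_{1,*}p_2^*\mcal{O}_\chi$ is the infinite-rank module $\mcal{O}_{H_n}\otimes_\C\C[q^{\pm1},t^{\pm1}]$; everything hinges on using the $T$-linearization to descend along the torsor $p_1$ and thereby single out a single weight, and on the preceding Lemma to guarantee that the pushforward is exact so that this weight decomposition is preserved. Once local freeness of rank one is established, the canonical $T$-linearization from the first paragraph is compatible with the chart trivializations, and we conclude that $L_\chi$ is a $T$-equivariant line bundle on $H_n$, namely $\mcal{O}_{H_n}$ with its equivariant structure twisted by $\chi$.
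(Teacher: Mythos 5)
Your proposal is correct, and its overall architecture matches the paper's: identify $L_{\chi}$ locally on the $T$-stable charts $U_{\mu}$ as $\mcal{O}_{U_{\mu}}\otimes_{\C}\C\cdot\chi$, then glue the resulting trivializations and linearizations over the overlaps. The genuine difference is that you justify the one step the paper merely asserts. The paper's proof writes down $L_{\chi}(U)=\mcal{O}_{H_{n}}(U)\otimes_{\C}\C\cdot\chi$ and declares the line-bundle claim ``clear,'' but as you correctly observe, the literal underived pushforward $p_{1,*}p_{2}^{*}\mcal{O}_{\chi}$ is the infinite-rank module $\mcal{O}_{H_{n}}\otimes_{\C}\C[q^{\pm1},t^{\pm1}]$, since every line bundle on the torus is trivial as a plain sheaf. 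Your resolution --- regard $p_{1}$ as the trivial $T$-torsor for the translation action on the second factor, and take the descent (equivalently, the $\chi$-weight component) of the linearized pushforward, which isolates the rank-one summand $\mcal{O}(U_{\mu})\cdot\chi$ --- is exactly the reading that makes the paper's formula for the sections correct, and it is the honest content of the proposition. Your treatment of equivariance (canonical linearization on a pushforward along an equivariant affine map, checked against the chart trivializations) is equivalent to the paper's construction of the isomorphism $\varphi\colon p_{1}^{*}L_{\chi}\to a^{*}L_{\chi}$ by gluing the canonical identifications over the $U_{\mu}$. In short: same route, but you supply the descent argument that the paper's two-line proof presupposes, and you correctly flag it as the heart of the matter.
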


\begin{proof}

First, we will show that $L_{\chi}$ is a line bundle. 
But this is clear since for any open subscheme $U$ of $H_{n}$, 
we have sections
\[
L_{\chi}(U) := \mcal{O}_{H_{n}}(U) \otimes_{\C} \C \cdot \chi. 
\]
This shows that, indeed, $L_{\chi}$ is a line bundle. 

To show that it is $T$-equivariant, we will construct an isomorphism
$\varphi: p_{1}^{\ast}L_{\chi} \rarr a^{\ast} L_{\chi}$, where $a:H_{n} \times T \rarr H_{n}$ is the action
map. We use the $T$-stable affine-covering of $\{ U_{\mu}\}$ of $H_{n}$ constructed as above. 
Restricted to the open set $U_{\mu}$, we have the canonical identifications 
$\varphi_{\mu}: p_{1}^{\ast}L_{\chi}(U_{\mu}) \cong a^{\ast} L_{\chi}(U_{\mu})$. 
These sections agree on the intersections $U_{\mu} \cap U_{\lambda}$, and, hence, 
they glue to a global isomorphism of sheaves. This proves our assertion. 
\end{proof}

For a cell $(r,s)$ in the square grid $\square_{n}$, let us denote by $\chi_{r,s}$ the character 
$\chi_{r,s} : T\rightarrow \C^{\ast}$ defined by 
\[
\chi_{r,s} (q,t) := q^{r}t^{s}.
\]
\begin{Definition} \label{D:VandN}
Let 
\[
\mathcal{V} := \bigoplus_{(r,s) \in \square_{n}} L_{\chi_{r,s}}
\]
denote a $T$-equivariant coherent sheaf on $H_{n}$.
\end{Definition}

\subsubsection{A variation: the construction of $\mcal{V}^{\prime}$.}

Let $\iota$ denote the following isogeny:
\begin{align}
\iota: T &\longrightarrow T \notag \\
(q,t) &\longmapsto (q^{n},t^{n}). \label{A:isogeny}
\end{align}
It fits to a commutative diagram as in Figure~\ref{F:2}.
Then, for a line bundle $\mcal{O}_{\chi}$ on $T$, we define 
$$
L^{\prime}_{\chi} := p_{1, \ast} \circ \tau^{\ast} \mcal{O}_{\chi}.
$$

\begin{figure}[htbp]
\begin{center}
\begin{tikzpicture}[scale=.75]
\node at (-2.25,1.5) (a) {$H_n \times T$};
\node at (1.75,1.5) (b) {$T$};
\node at (1.75,-1.65) (c) {$T$};
\node at (-2.25,-1.65) (d) {$H_n$};
\node at (-2.75,0) {$p_1$}; 
\node at (0,2) {$p_2$}; 
\node at (-.5,-.5) {$\tau$}; 
\node at (2.2,0) {$\iota$}; 
\draw[->,thick] (a) to (c);
\draw[->,thick] (a) to (b);
\draw[->,thick] (b) to (c);
\draw[->,thick] (a) to (d);
\end{tikzpicture}
\end{center}
\caption{}
\label{F:2}
\end{figure}

By using the same arguments as before,  
we can show that $L_{\chi}^{\prime}$ is a $T$-equivariant line bundle on $H_{n}$. 
We define, analogous to $\mcal{V}$, a $T$-equivariant coherent sheaf 
\begin{align*}
\mathcal{V}^{\prime} &:= \oplus_{(r,s) \in \square_{n}} L^{\prime}_{\chi_{r,s}}.
\end{align*}

\subsubsection{The descriptions of $\mcal{F}$ and $\mcal{F}^\prime$.}

Let $\mcal{K}^\bullet$ denote the complex
\begin{align*}
  0 \rightarrow\mcal{V}\otimes \mc{B} \otimes \wedge^{n+1} (\mc{B}^\prime \oplus \mc{O}_t \oplus \mc{O}_q)
  \stackrel{\partial_{n+1}}{\rightarrow} \dots \stackrel{\partial_{2}}{ \rightarrow} 
\mcal{V} \otimes\mc{B} \otimes (\mc{B}^\prime \oplus \mc{O}_t \oplus \mcal{O}_q)
\stackrel{\partial_{1}}{\rightarrow} \mcal{V} \otimes \mc{B} \rarr 0. 
\end{align*}
For any positive integer $j$, let $\partial^\prime_i$ denote the composition $i \circ \partial_i \circ \Tr$ in the diagram
\eqref{E:differential} below. 
\begin{equation} \label{E:differential}
\begin{tikzcd}
    \mcal{V} \otimes \mcal{O}_{H_n} \otimes \wedge^{j+1} (\mc{B}^\prime \oplus \mc{O}_t \oplus \mc{O}_q)  \arrow[d,"i"]
    \arrow[r, dashed, "\partial^\prime_i"] & \mcal{V} \otimes \mcal{O}_{H_n} \otimes \wedge^{j} (\mc{B}^\prime \oplus \mc{O}_t \oplus \mc{O}_q) \\
    \mcal{V} \otimes \mcal{B} \otimes \wedge^{j+1} (\mc{B}^\prime \oplus \mc{O}_t \oplus \mc{O}_q) \arrow[r,"\partial_i"] & \mcal{V} \otimes \mcal{B} \otimes \wedge^{j} (\mc{B}^\prime \oplus \mc{O}_t \oplus \mc{O}_q) \arrow[u,"\Tr"]
\end{tikzcd}
\end{equation}
In \eqref{E:differential}, $i$ is induced by the canonical push-forward 
$\mcal{O}_{H_{n}} \hookrightarrow \mcal{B}$, and $\Tr$ is induced by the
trace map $\Tr: \mcal{B} \rarr \mcal{O}_{H_n}$. 
The following proposition is clear from straightforward computation.

\begin{Proposition}
We have $\partial^\prime_i \circ \partial^\prime_{i-1} = 0$. 
By putting the maps $\partial^\prime_i$ together, 
we get a bounded  complex $\mcal{S}^\bullet$ given by
  \[
  \mcal{S}^\bullet \defeq  0 \rightarrow\mcal{V}\otimes \mc{O}_{H_n} \otimes \wedge^{n+1} (\mc{B}^\prime \oplus \mc{O}_t \oplus \mc{O}_q)
\rarr \dots \rarr \mcal{V} \otimes\mc{O}_{H_n} \otimes (\mc{B}^\prime \oplus \mc{O}_t \oplus \mcal{O}_q) \rarr \mcal{V} \otimes \mc{O}_{H_n} \rarr 0. 
\]
Moreover, we have the chain maps $i: \mcal{S}^\bullet \rarr \mcal{K}^\bullet$ and $\Tr: \mcal{K}^\bullet \rarr
\mcal{S}^\bullet$.  Finally, $\Tr \circ i $ is the identity map on $\mcal{S}^\bullet$.
\end{Proposition}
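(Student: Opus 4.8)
The plan is to verify the three assertions---that $\partial'_\bullet$ squares to zero, that $i$ and $\Tr$ are chain maps, and that $\Tr\circ i=\mathrm{id}$---entirely by local, $\mcal{O}_{H_n}$-linear computations. All the sheaves in sight are built from $\mcal{V}$, $\mcal{B}$, and the fixed bundle $W\defeq\mc{B}^\prime\oplus\mc{O}_t\oplus\mc{O}_q$ by tensor and exterior operations, and the maps $i$, $\Tr$, and the differentials $\partial_\bullet$ of \eqref{E:KoszulResolution} are all $T$-equivariant and $\mcal{O}_{H_n}$-linear; so I would work on the $T$-stable affine charts $U_\mu$, where $\mcal{B}$ trivializes and the splitting $\mcal{B}=\mc{B}^\prime\oplus\mcal{O}_{H_n}$ with its defining identity $\Tr\circ i=\mathrm{id}_{\mcal{O}_{H_n}}$ is explicit. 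Throughout, $i$ and $\Tr$ touch only the leading tensor factor ($\mcal{O}_{H_n}$ versus $\mcal{B}$) and are the identity on $\mcal{V}\otimes\wedge^\bullet W$, while $\partial'_\bullet=\Tr\circ\partial_\bullet\circ i$ is read off from \eqref{E:differential}. The last assertion, $\Tr\circ i=\mathrm{id}_{\mcal{S}^\bullet}$, is then immediate in each homological degree, being just $(\Tr\circ i)\otimes\mathrm{id}=\mathrm{id}$.

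For $\partial'_\bullet\circ\partial'_\bullet=0$ the cleanest route is to recognize $\partial'_\bullet$ as an interior product. The differential $\partial_\bullet$ of Haiman's Koszul complex \eqref{E:KoszulResolution} is contraction against the tautological cosection $\psi\colon W\to\mcal{B}$ (the inclusion on $\mc{B}^\prime$ and multiplication by the coordinate functions on $\mc{O}_t$ and $\mc{O}_q$). Precomposing with $i$, which inserts the unit section, and postcomposing with $\Tr$ collapses the leading factor, so $\partial'_\bullet$ becomes contraction against the single fixed cosection $\phi\defeq\Tr\circ\psi\colon W\to\mcal{O}_{H_n}$. Interior multiplication by a fixed element of $W^\vee$ squares to zero on $\wedge^\bullet W$, which yields $\partial'_\bullet\circ\partial'_\bullet=0$ and assembles the $\partial'_\bullet$ into the bounded complex $\mcal{S}^\bullet$. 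As a cross-check I would expand one composite $\partial'_{j}\,\partial'_{j+1}$ by hand and watch the standard Koszul signs cancel in pairs once the trace has been applied, using that the scalar functions $\phi(w_k)$ commute.

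The substantive point, and the step I expect to be the main obstacle, is the chain-map compatibility. Substituting $\partial'_\bullet=\Tr\circ\partial_\bullet\circ i$, the identity $\Tr\circ\partial_\bullet=\partial'_\bullet\circ\Tr$ reduces to showing that $\Tr\circ\partial_\bullet$ factors through the idempotent $e\defeq i\circ\Tr$. The key geometric input is that the image of $\psi$ lies in the augmentation part $\mc{B}^\prime=\ker\Tr$: since $\Tr$ annihilates the products that land in $\mc{B}^\prime$, the composite $\Tr\circ\partial_\bullet$ kills the $\mc{B}^\prime$-summand of the leading factor, giving $\Tr\circ\partial_\bullet=\Tr\circ\partial_\bullet\circ e$, which is exactly the chain-map identity for $\Tr$. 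The companion identity $\partial_\bullet\circ i=i\circ\partial'_\bullet$ for $i$ is the genuinely delicate one, since one must control the $\mc{B}^\prime$-component of $\partial_\bullet\circ i$; I expect this to force a careful, degree-by-degree description of the Koszul differential relative to the splitting $\mcal{B}=\mc{B}^\prime\oplus\mcal{O}_{H_n}$. Once the commuting squares \eqref{E:differential} are checked on each $U_\mu$, $T$-equivariance and $\mcal{O}_{H_n}$-linearity let them glue to global chain maps, and the proposition follows; the only real bookkeeping is the interplay between the Koszul differential and the trace splitting, everything else being formal.
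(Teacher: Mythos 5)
Your handling of the two easy assertions is fine: writing $\partial'_j=\Tr\circ\partial_j\circ i$ and observing that on $1\otimes w_1\wedge\cdots\wedge w_{j+1}$ this becomes contraction against the fixed $\mcal{O}_{H_n}$-linear functional $\phi=\Tr\circ\psi$ on $W=\mc{B}^\prime\oplus\mc{O}_t\oplus\mc{O}_q$ does give $\partial'\circ\partial'=0$ by the standard Koszul identity, and $\Tr\circ i=\mathrm{id}$ on each graded piece is immediate from the normalization of the trace splitting. (For what it is worth, the paper offers no argument at all here --- it declares the proposition ``clear from straightforward computation'' --- so there is no authorial proof to measure your route against.)

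The genuine gap is exactly where you predicted it: the chain-map compatibilities. The partial argument you offer for the $\Tr$ half does not work. You assert that $\Tr\circ\partial$ factors through the idempotent $i\circ\Tr$ because ``$\Tr$ annihilates the products that land in $\mc{B}^\prime$.'' But $\mc{B}^\prime=\ker\Tr$ is only an $\mcal{O}_{H_n}$-module complement of $\mcal{O}_{H_n}\cdot 1$ inside the rank-$n$ algebra $\mcal{B}$; it is not an ideal, and $\Tr$ is not an algebra map. Unwinding the Koszul differential, the identity $\Tr\circ\partial=\partial'\circ\Tr$ amounts to $\Tr\bigl(\psi(w)\,b\bigr)=\Tr\bigl(\psi(w)\bigr)\Tr(b)$ for all $w\in W$, $b\in\mcal{B}$; for $w\in\mc{B}^\prime$ this would force $\Tr(\psi(w)b)=0$ for every $b$, i.e. $\mc{B}^\prime$ would lie in the radical of the trace pairing, which already fails at a reduced point $I$ where $\mcal{B}_I\cong\C^n$ and the trace form is nondegenerate. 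Dually, $\partial\circ i=i\circ\partial'$ would require $\psi(w)=\Tr(\psi(w))\cdot 1_{\mcal{B}}$, which fails on $\mc{B}^\prime$; this is the half you explicitly leave open. So with the differential as you (reasonably) reconstruct it, the squares in \eqref{E:differential} do not literally commute, and the route you sketch cannot close. To repair it you would need either a description of Haiman's differential that is genuinely block-compatible with the splitting $\mcal{B}=\mcal{O}_{H_n}\oplus\mc{B}^\prime$, or an argument producing $i$ and $\Tr$ as morphisms in the derived category (up to homotopy) rather than strict chain maps. As written, the central claim of the proposition --- the one the paper immediately uses to exhibit $\mcal{S}^\bullet$ as a direct summand of $\mcal{K}^\bullet$ --- remains unproved in your proposal.
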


In fact, the following is true.
\begin{Proposition}
  In the bounded derived category of equivariant sheaves, the complex $\mcal{S}^\bullet$ is a direct summand of
  $\mcal{K}^\bullet$.
\end{Proposition}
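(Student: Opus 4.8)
The plan is to exploit the \emph{strict} splitting furnished by the preceding proposition, rather than a splitting merely up to homotopy. From the chain maps $i : \mcal{S}^\bullet \to \mcal{K}^\bullet$ and $\Tr : \mcal{K}^\bullet \to \mcal{S}^\bullet$ satisfying $\Tr \circ i = \mathrm{id}_{\mcal{S}^\bullet}$, I would form the endomorphism $e := i \circ \Tr : \mcal{K}^\bullet \to \mcal{K}^\bullet$. A one-line computation gives $e^2 = i \circ (\Tr \circ i) \circ \Tr = i \circ \Tr = e$, so $e$ is an idempotent chain map, and the entire content of the proposition is that this idempotent splits off $\mcal{S}^\bullet$ as a summand.

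First I would note that the category of $T$-equivariant coherent sheaves on $H_n$ is abelian, and that in any abelian category idempotents split. Hence in each degree $j$ the idempotent $e^j : \mcal{K}^j \to \mcal{K}^j$ yields a decomposition $\mcal{K}^j = \operatorname{im}(e^j) \oplus \ker(e^j)$. Because $e$ commutes with the differentials $\partial_\bullet$, we have $\partial_\bullet(\operatorname{im} e) \subseteq \operatorname{im} e$ and likewise for $\ker e$, so these degreewise decompositions assemble into a decomposition of complexes
\[
\mcal{K}^\bullet \;\cong\; \operatorname{im}(e) \;\oplus\; \ker(e)
\]
already inside the category of chain complexes, before any passage to the derived category.

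Next I would identify $\operatorname{im}(e)$ with $\mcal{S}^\bullet$. From $e \circ i = i \circ (\Tr \circ i) = i$ one gets $\operatorname{im}(i) \subseteq \operatorname{im}(e)$, and from $e = i \circ \Tr$ the reverse inclusion, so $\operatorname{im}(e) = \operatorname{im}(i)$. The maps $i : \mcal{S}^\bullet \to \operatorname{im}(e)$ and $\Tr|_{\operatorname{im}(e)} : \operatorname{im}(e) \to \mcal{S}^\bullet$ are then mutually inverse chain isomorphisms: one composite is $\Tr \circ i = \mathrm{id}_{\mcal{S}^\bullet}$ by hypothesis, and the other is $e$ restricted to its own image, which acts as the identity there. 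Therefore $\mcal{S}^\bullet \cong \operatorname{im}(e)$ and
\[
\mcal{K}^\bullet \;\cong\; \mcal{S}^\bullet \oplus \ker(e).
\]
Since the localization functor from chain complexes to $\Der^b_T(H_n)$ is additive, this direct-sum decomposition is preserved in the bounded derived category, which is exactly the assertion.

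The step that I expect to need the most care is the bookkeeping ensuring the splitting occurs at the chain level and not merely up to homotopy. The essential input is the \emph{strict} equality $\Tr \circ i = \mathrm{id}$ established in the previous proposition; this is precisely what allows me to work inside the abelian category of equivariant sheaves, where idempotents split automatically, and to avoid invoking idempotent-completeness of $\Der^b_T(H_n)$ itself. Had the retraction held only up to chain homotopy, I would instead have had to appeal to the Karoubian structure of the bounded derived category, which is a genuine theorem rather than a formality; the strictness of $\Tr \circ i$ is exactly what sidesteps that issue.
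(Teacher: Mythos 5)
Your argument is correct and complete, but it takes a different route from the paper, which disposes of the statement in one line by citing Lemma 1.2.8 of Neeman's \emph{Triangulated Categories}: in any triangulated category, a map $i$ admitting a retraction $\Tr$ with $\Tr\circ i=\mathrm{id}$ exhibits its source as a direct summand of its target (the complement being the cone of $i$). Your version instead splits the idempotent $e=i\circ\Tr$ degreewise in the abelian category of $T$-equivariant coherent sheaves, checks compatibility with the differentials, and obtains $\mcal{K}^\bullet\cong\mcal{S}^\bullet\oplus\ker(e)$ already at the chain level before localizing. The two proofs consume the same input (the strict retraction from the preceding proposition) but buy slightly different things: the paper's citation is shorter and stays entirely inside the triangulated formalism, while your chain-level splitting is self-contained, produces an explicit complement $\ker(e)$ as an honest subcomplex rather than as a cone, and, as you note, makes it transparent that no idempotent-completeness of $\Der^b_T(H_n)$ is being invoked. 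One small point you could make explicit: since each term of $\mcal{K}^\bullet$ is locally free and $e$ is idempotent, $\operatorname{im}(e)$ and $\ker(e)$ are direct summands of locally free sheaves, hence again coherent (indeed locally free), so the decomposition genuinely lives in the category the proposition refers to.
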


\begin{proof} 
This follows from Lemma 1.2.8 of \cite{ANeeman} and the previous proposition.
\end{proof}

The complex $\mcal{K}^\bullet$ is exact except at degree zero. 
Consequently, the complex $\mcal{S}^\bullet$ is also exact except possibly at degree zero. 
In fact, from the description of the resolution \eqref{E:KoszulResolution}, 
it follows that $\mcal{S}^\bullet$ is not exact at degree zero 
(see the discussion after Proposition 2.2 in~\cite{HaimanInven}.)

\begin{Definition}
Let $\mcal{F}$ denote the image of $\mcal{S}^\bullet$ inside $\mcal{O}_{Z_n} \otimes  \mcal{V}$ 
under the quasi-isomorphism $\mcal{K}^\bullet \rarr \mcal{O}_{Z_n} \otimes \mcal{V}$.
In the above construction, if $\mcal{V}^\prime$ is used instead of $\mcal{V}$, 
then let $\mcal{F}^\prime$ denote the corresponding image.
\end{Definition}


\subsection{Step 2.}

In this subsection, we will calculate the equivariant Euler characteristic of the sheaf $\mcal{F} \otimes \mcal{P}$ 
(resp. $\mcal{F}^{\prime} \otimes \mcal{P}$) by using an appropriate version of the Atiyah-Bott-Lefschetz
formula.

We know from~\cite[(3.1)]{HaimanDiscrete} that
\begin{align}\label{A:H's formula}
\sum_{i=0}^{n} (-1)^{i} \Tr_{H^{i}(H_{n}, \mcal{E})} (\tau) = \sum_{\mu \vdash n} \frac{\sum_{i=0}^{n} (-1)^{i}
  \Tr_{\Tor_{i}(k(I_{\mu}), \mcal{E})}(\tau)}{\det_{C(I_{\mu})}(1- \tau)},
\end{align}
where $\mcal{E}$ is any bounded complex of coherent sheaves, 
$\tau \in T$, $k(I_\mu)$ is the residue field and $C(I_{\mu})$ is the tangent space the $T$-fixed point $I_{\mu}$.
We will evaluate the right hand side of (\ref{A:H's formula}) for the sheaf $\mcal{E} = \mcal{F} \otimes \mcal{P}$. 
Note that the terms in the denominator, namely $\det_{C(I_{\mu})}(1 - \tau)$, are well known from the
work of Ellingsrud and Str\o mme (see~\cite{Ellingsrud}). 
So, we will calculate the numerator. 
The tor-modules $\Tor_i(k(I_\mu),\mcal{F} \otimes \mcal{P})$ are obtained as the 
homology of the complex $\mcal{S}^\bullet\otimes \mcal{P} \otimes k(I_\mu)$. 
By using the multiplicative property of the trace map with respect to tensor products, 
we obtain  
\begin{align}\label{E:CalculatingTraces}
\sum_{i} (-1)^{i}\Tr_{\Tor_{i}(k(I_{\mu}), \mcal{F} \otimes \mcal{P})}(\tau) = 
\Tr_{k(I_{\mu})}(\tau) \cdot \Tr_{\mcal{P}_{I_\mu}}(\tau) \cdot \Tr_{\mcal{V}_{I_\mu}}(\tau) \cdot \left( \sum_{i}(-1)^{i}
  \Tr_{\wedge^{i}(\mcal{B}^{\prime} \oplus \mcal{O}_{t} \oplus \mcal{O}_{q})|_{I_{\mu}}} \right )
\end{align}

Next, we look at the explicit formulation of the right hand side of (\ref{E:CalculatingTraces})
for the specialization $\tau = (q,t)$. 

\begin{itemize}
\item It follows from the descriptions of the coordinate rings of the affine open sets $U_{\mu}$, 
and the $T$-actions on their coordinates as in (\ref{E:standardopen}), 
that $\Tr_{k(I_{\mu})}(q,t) = n$. 

\item By~\cite[Proposition 5.4.1]{HaimanSurvey}, we know that 
$\Tr_{\mathcal{P}_{I_\mu}}(q,t)$ is the modified Macdonald polynomial.

\item 
It follows immediately from the construction of $\mathcal{V}$ that 
\[ 
\mathsf{tr}_{\mathcal{V}_{I_\mu}}(q,t) = [t]_{n}[q]_{n}.
\]  

\item 
The final term involving the alternating sum of traces equals 
$(1-t)(1-q) \prod_{\mu}$ (see~\cite[Theorem 2]{HaimanDiscrete}).
\end{itemize}

Now, by putting these identities together on the right hand side of (\ref{E:CalculatingTraces}), 
we see that the equivariant Euler characteristic of $\mcal{F}$ equals $n$ times the
Frobenius characteristic of $(-1)^{n-1} \nabla p_{n}$.

\begin{Remark}
If we use the sheaf $\mathcal{F}^{\prime}$ instead of $\mcal{F}$, then 
on the right-hand side of equation (\ref{E:CalculatingTraces}) we get  
$\mathsf{tr}_{\mathcal{V}_{I_\mu}^{\prime}}(q,t)$ instead of $\Tr_{\mcal{V}}(q,t)$. 
This contributes the factor $[t]_{n}[q]_{n}/n$.
\end{Remark}

\subsection{Step 3.}

In the previous subsection, we calculated the equivariant Euler characteristic 
\[\sum_{i} (-1)^{i}
\mathsf{tr}_{H^{i}(H_{n},\mcal{F} \otimes \mcal{P})}(\tau)
\] 
at $\tau = (q,t)$. 
In this section, we will prove a cohomology vanishing result, which will show that the summands 
$\Tr_{H^i(H_n, \mcal{F}\otimes \mcal{P})}$ vanish for $i > 0$. In particular, this will imply that, 
under the derived equivalence, $\Phi(\mcal{F} \otimes \mcal{P})$ is an honest $S_n$-representation 
(and not just a virtual representation) with the expected Frobenius character series.

\begin{Proposition}\label{P:vanishing}
For all $i > 0$, the higher cohomology groups vanish,
\begin{align}\label{A:vanish}
H^{i}(H_{n}, \mcal{F} \otimes \mcal{P}) = 0.
\end{align}
Furthermore, the vanishing result still holds true if $\mcal{F}$ is replaced by $\mcal{F}^{\prime}$ in (\ref{A:vanish}).
\end{Proposition}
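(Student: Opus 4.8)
The plan is to establish the vanishing $H^i(H_n, \mcal{F} \otimes \mcal{P}) = 0$ for $i>0$ by reducing it to a known vanishing result for the Procesi bundle, exploiting the fact that $\mcal{F}$ arises as a direct summand of a twist of $\mcal{O}_{Z_n}$ by the line-bundle sum $\mcal{V}$. First I would recall that $\mcal{S}^\bullet$ is a direct summand of $\mcal{K}^\bullet$ in the derived category (by the Proposition cited from~\cite{ANeeman}), and that $\mcal{K}^\bullet$ is the Koszul-type complex resolving $\mcal{O}_{Z_n} \otimes \mcal{V}$. Since $\mcal{F}$ is defined as the image of $\mcal{S}^\bullet$ inside $\mcal{O}_{Z_n}\otimes \mcal{V}$ and $\mcal{S}^\bullet$ is concentrated (up to exactness) in degree zero, the object $\mcal{F}$ is a cohomology sheaf that is a direct summand of $\mcal{O}_{Z_n} \otimes \mcal{V}$. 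Tensoring with the locally free Procesi bundle $\mcal{P}$ preserves the direct-summand relation, so it suffices to prove the vanishing for $\mcal{O}_{Z_n} \otimes \mcal{V} \otimes \mcal{P}$ and then restrict to the summand.

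The key input will be Haiman's cohomology vanishing theorem for the Procesi bundle restricted to the zero fiber, namely that $H^i(H_n, \mcal{O}_{Z_n} \otimes \mcal{P}) = 0$ for $i>0$ (this is the geometric heart of the $n!$-theorem, where $H^0$ recovers the diagonal coinvariant ring). The next step is to upgrade this to allow the twist by $\mcal{V} = \bigoplus_{(r,s)\in \square_n} L_{\chi_{r,s}}$. Because each $L_{\chi_{r,s}}$ is, as an $\mcal{O}_{H_n}$-module, isomorphic to the structure sheaf $\mcal{O}_{H_n}$ (it differs only by a $T$-character, as shown in the Proposition establishing $L_\chi$ is a line bundle with sections $\mcal{O}_{H_n}(U)\otimes_\C \C\cdot \chi$), tensoring by $\mcal{V}$ does not change the underlying sheaf cohomology — it only twists the $T$-equivariant structure. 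Hence $H^i(H_n, \mcal{O}_{Z_n}\otimes \mcal{V} \otimes \mcal{P}) \cong \bigoplus_{(r,s)} H^i(H_n, \mcal{O}_{Z_n}\otimes \mcal{P})\otimes \C\cdot\chi_{r,s}$, which vanishes for $i>0$ by Haiman's theorem. Finally, since $\mcal{F}\otimes \mcal{P}$ is a direct summand of $\mcal{O}_{Z_n}\otimes \mcal{V}\otimes \mcal{P}$, its higher cohomology is a direct summand of zero and therefore vanishes. The identical argument applies verbatim to $\mcal{F}^\prime$, since $L^\prime_{\chi_{r,s}}$ is likewise a line bundle isomorphic to $\mcal{O}_{H_n}$ as a plain sheaf, differing only through the isogeny-twisted $T$-character.

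The main obstacle I anticipate is making the reduction to a genuine direct summand fully rigorous at the level of cohomology sheaves rather than complexes. The subtlety is that $\mcal{S}^\bullet$ is a summand of $\mcal{K}^\bullet$ in the derived category, and $\mcal{K}^\bullet$ is only quasi-isomorphic to $\mcal{O}_{Z_n}\otimes \mcal{V}$ (concentrated in degree zero); one must verify that the splitting $\Tr\circ i = \mathrm{id}$ on $\mcal{S}^\bullet$ descends to a splitting of the degree-zero cohomology, so that $\mcal{F}$ really is a direct summand of the sheaf $\mcal{O}_{Z_n}\otimes \mcal{V}$ and not merely of a complex with possible higher cohomology. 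Here I would use that $\mcal{K}^\bullet$ is exact except in degree zero (stated in the excerpt), so passing to $H^0$ commutes with the idempotent, and the retract $\mcal{F} \hookrightarrow \mcal{O}_{Z_n}\otimes \mcal{V}$ is split. Once this formal point is settled, the hypercohomology spectral sequence degenerates appropriately and the vanishing transfers cleanly to the summand.
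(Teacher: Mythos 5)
Your argument is correct, but it takes a genuinely different route from the paper's. The paper never passes to a direct summand of the sheaf $\mcal{O}_{Z_n}\otimes \mcal{V}\otimes \mcal{P}$; instead it works with the whole complex $\mcal{G}^\bullet = \mcal{S}^\bullet\otimes^{L}\mcal{P}$, observes that each of its terms is a direct summand of $(\mcal{P}\otimes\mcal{B}^{\otimes l})^{n^2}$, invokes Haiman's vanishing theorem for $\mcal{P}\otimes\mcal{B}^{\otimes l}$ (\cite[Theorem 2.1]{HaimanInven}) to conclude that the terms are $\Gamma$-acyclic, and then reads off $\RR\Gamma(\mcal{F}\otimes\mcal{P})=\Gamma(\mcal{G}^\bullet)$, a complex concentrated in non-positive degrees. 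You instead collapse to degree-zero cohomology first: since $H^0$ is functorial on chain maps, the splitting $\Tr\circ i=\mathrm{id}$ yields $H^0(\Tr)\circ H^0(i)=\mathrm{id}$, so $\mcal{F}\cong H^0(\mcal{S}^\bullet)$ is a split subsheaf of $\mcal{O}_{Z_n}\otimes\mcal{V}$; tensoring with the locally free $\mcal{P}$ and trivializing $\mcal{V}\cong\mcal{O}_{H_n}^{n^2}$ non-equivariantly reduces everything to the vanishing of $H^{i}(H_n,\mcal{O}_{Z_n}\otimes\mcal{P})$ for $i>0$, which is indeed a theorem of Haiman. Both routes are sound and both ultimately rest on \cite{HaimanInven}; the difference is that the vanishing for $\mcal{O}_{Z_n}\otimes\mcal{P}$ is itself deduced by Haiman from the $\mcal{P}\otimes\mcal{B}^{\otimes l}$ vanishing via the very Koszul complex \eqref{E:KoszulResolution}, so your proof outsources to the literature the step the paper carries out explicitly. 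What your version buys is a cleaner structural statement --- $\mcal{F}\otimes\mcal{P}$ is literally a direct summand of $(\mcal{O}_{Z_n}\otimes\mcal{P})^{n^2}$, which immediately gives effectivity of the resulting $S_n$-module as well --- while the paper's version stays entirely at the level of term-wise acyclicity and avoids any appeal to properties of $H^0(\mcal{K}^\bullet)$ beyond exactness of the complex. The subtlety you flag (descending the idempotent from the complex to its degree-zero cohomology) is real but is resolved exactly as you indicate.
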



\begin{proof}
  Let $\Gamma$ denote the global section functor on $H_n$. We set $\mcal{G} \defeq \mcal{F}\otimes \mcal{P}$ and
  $\mcal{G}^\bullet \defeq \mcal{S}^\bullet \otimes^{L} \mcal{P}$. Passing to the derived category we have isomorphisms
  \begin{align}\label{E: cohomologie}
    \RR\Gamma(\mcal{G}) = \RR\Gamma(\mcal{G}^\bullet).
  \end{align}
  The terms of the complex $\mcal{G}^\bullet$ are direct summands of 
  $(\mcal{P} \otimes \mcal{B}^{\otimes l})^{n^2}$. 
  Since cohomology commutes with direct sums, it follows from~\cite[Theorem 2.1]{HaimanInven} that
  $\mcal{G}^\bullet$ is a complex of acyclic objects for $\Gamma$. 
  Hence, we have
  \[
  \RR\Gamma(\mcal{G}) = \Gamma(\mcal{G^\bullet}). 
  \]
  The complex $\Gamma(\mcal{G}^\bullet)$ has zero cohomology in positive degrees. 
  So, we get $H^i(H_n, \mcal{G}) = 0$ for $i > 0$. 
  Since $H^i(H_n, \mcal{G}) = 0 $ for $i < 0$, the proof of the first claim is complete. 
  
 For the proof of the second claim, we apply the same arguments.
  
\end{proof}

\bibliography{ReferenceNabla}
\bibliographystyle{alpha}

\end{document}